\newcommand{\bH}{\mathbb{H}}
\newcommand{\bN}{\mathbb{N}}
\newcommand{\bR}{\mathbb{R}}
\newcommand{\bZ}{\mathbb{Z}}
\newtheorem{theorem}{Theorem}[section]
\newtheorem{lemma}[theorem]{Lemma}
\newtheorem{proposition}[theorem]{Proposition}
\theoremstyle{definition}
\newtheorem{remark}[theorem]{Remark}
\newcommand{\mr}[1]{{\rm #1}}
\title{An elementary proof of the string topology structure of compact oriented surfaces}
\author{A.P.M. Kupers}
\date{May 27, 2011}
\begin{document}

\begin{abstract}We give a new proof of the string topology structure of a compact-oriented surface of genus $g \geq 2$, using elementary algebraic topology. This reproves the result of Vaintrob.\end{abstract}

\maketitle

\section*{Introduction}

Let $\Sigma_g$ be a compact oriented surface of genus $g$, then $\Sigma_g$ is an Eilenberg-Mac Lane space $K(\pi_g,1)$ for $\pi_g$ the surface group $\pi_1(\Sigma_g,p)$, where $p \in \Sigma_g$ is any basepoint. This group has a presentation given by:
\[\pi_g = \langle A_1,\ldots,A_g,B_1,\ldots,B_{g} \rangle/([A_1,B_1]\cdots[B_g,A_g])\]

We will calculate the string topology structure on $H_*(L\Sigma_g;\bZ)$ for $g \geq 2$, i.e. we will give the string product, BV-operator and string coproduct as defined in \cite{chassullivan} \cite{cohenvoronov}. These comprise all explicitly known non-zero string operations. In doing this we reprove the results of Vaintrob \cite{vaintrob}. The cases $g = 0$ and $g=1$ are a consequence of Menichi's work on the string topology of spheres \cite{menichi} and, in the second case, of the fact that the string topology structure of a product is the product of the string topology structures. From now we always assume $g \geq 2$.

\section{A property of $\pi_g$}
We will need the following fact: the centralizer of $h \in \pi_g$ is infinite cyclic unless $h = e$, in which case it is obviously the entire group $\pi_g$.

\begin{proposition}\label{propcentralizer} If $h \neq e \in \pi_g$, then the centralizer of $h$ is infinite cyclic.\end{proposition}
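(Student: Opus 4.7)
My plan is to exploit the hyperbolic geometry available when $g \geq 2$. Since $\chi(\Sigma_g) = 2-2g < 0$, I can endow $\Sigma_g$ with a hyperbolic metric, so that the universal cover is the upper half plane $\bH$ and $\pi_g$ embeds as a discrete, cocompact subgroup of $\mr{Isom}^+(\bH) = \mr{PSL}(2,\bR)$ acting freely on $\bH$. Freeness of the action immediately yields torsion-freeness, since any finite-order orientation-preserving isometry of $\bH$ is elliptic and therefore has a fixed point.

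Next I would classify the non-identity elements of $\pi_g$ via the standard trichotomy in $\mr{PSL}(2,\bR)$. Elliptic elements are ruled out by the previous paragraph. Parabolic elements fix a unique point on the boundary circle $\partial \bH$ and would produce a cusp in the quotient $\pi_g \backslash \bH = \Sigma_g$, contradicting compactness. Consequently every $h \neq e$ is hyperbolic and translates along a unique geodesic axis $\ell_h \subset \bH$ by some $\tau_h > 0$.

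For the centralizer itself, suppose $k \in \pi_g$ satisfies $kh = hk$. Then $k$ permutes the two endpoints of $\ell_h$ on $\partial\bH$ and therefore preserves $\ell_h$ setwise. If $k$ were to swap these endpoints, a direct computation gives $khk^{-1} = h^{-1}$, which combined with $kh = hk$ forces $h^2 = e$, contradicting torsion-freeness. Hence $k$ preserves $\ell_h$ together with its orientation, and so acts on $\ell_h$ as a translation. This exhibits the centralizer $C(h)$ as a subgroup of the translation group $\mr{Isom}^+(\ell_h) \cong \bR$, and the inclusion is discrete because $\pi_g$ acts properly discontinuously on $\bH$. Every discrete subgroup of $\bR$ is cyclic, and ours contains $h \neq e$, so it is infinite cyclic, as required.

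The main point that needs care is the orientation-swap step: torsion-freeness is precisely what prevents the centralizer from being merely virtually cyclic. The remaining inputs---existence of a hyperbolic metric for $g \geq 2$, the classification of elements of $\mr{PSL}(2,\bR)$, and the correspondence between cusps and parabolic deck transformations---are all standard facts that I would cite rather than prove.
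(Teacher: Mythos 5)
Your proof is correct and takes essentially the same route as the paper: embed $\pi_g$ as a discrete, torsion-free subgroup of $\mr{PSL}(2,\bR)$, dispose of elliptic (and, via compactness, parabolic) elements, and exhibit the centralizer of a hyperbolic element as a discrete subgroup of a one-parameter group isomorphic to $\bR$, hence infinite cyclic. The only cosmetic differences are that you work with the axis and translation lengths (carefully excluding the endpoint swap) where the paper conjugates to the normal form $z \mapsto \lambda z$, and that you rule out parabolics by compactness, which the paper instead handles directly in the proof and relegates to a remark.
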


\begin{proof}We will use the theory of Riemann surfaces and in particular of $PSL(2,\bR)$ as isometries of the upper half-plane as model for hyperbolic space. We also use that all discrete subgroups of $\bR$ are isomorphic to $\bZ$.

A particular universal cover of $\Sigma_g$ is the upper-half plane $\bH$ and $\pi_g$ can be made to act -- of course still freely and properly discontinuous -- on $\bH$ by isometries. This means that we have an inclusion $\pi_g \hookrightarrow PSL(2,\bR)$. Because the fundamental domain has non-zero volume, the image has to be discrete. The non-identity elements of $PSL(2,\bR)$ can be classified into three types, depending on their fixed points when acting on the compactification $\bar{\bH}$. 
\begin{enumerate}
	\item Elliptic elements have a fixed point in $\bH$, so can't be in the image of $\pi_g$. 
	\item Parabolic elements have a single fixed point on the boundary and are all conjugate to $z \mapsto z + 1$. Thus, if $h$ is sent to a parabolic element, without loss of generality we can assume it is sent to $z \mapsto z+1$. Another element in $PSL(2,\bR)$ only commutes with this if it is of the form $z \mapsto z+t$. This means that the image of the centralizer of $h$ must lie in $\{z \mapsto z+t| t \in \bR\} \cong \bR$ and be discrete, hence be infinite cyclic. 
	\item Hyperbolic elements have two distinct fixed points on the boundary and are all conjugate to $z \mapsto \lambda z$ for some $\lambda \in \bR_{>0}$. An element of $PSL(2,\bR)$ only commutes with this if it is of the form $z \mapsto \rho z$ for some $\rho \in \bR_{>0}$. Hence, if $h$ is sent to a hyperbolic element without loss of generality it is sent to $z \mapsto \lambda z$. Then the centralizer of $h$ is sent to a discrete subgroup of $\{z \mapsto \rho z| \rho \in \bR_{>0}\} \cong \bR$ and again we conclude that it is infinite cyclic.
\end{enumerate}
\end{proof}

\begin{remark}In fact, because we required $g \geq 2$, the element $h$ always has to be mapped to an hyperbolic element.\end{remark}

\section{The homotopy and integral homology groups of $L\Sigma_g$}
We will show that that the fact that $\Sigma_g$ is a $K(\pi_g,1)$ implies that $L\Sigma_g$ is a disjoint union of $\Sigma_g$ with a number of circles, which makes its homology easy to compute. First note that $\pi_0(L\Sigma_g)$ consists of the conjugacy classes in $\pi_g$, which in canonical bijection with the set $L$ of isotopy classes of closed curves on $\Sigma_g$. This means we get a connected component $L_{[h]}\Sigma_g$ for each conjugacy class of $\pi_g$ or equivalently for each isotopy class of closed curve. 

Consider the long exact sequence of homotopy groups associated to the restriction of the fibration $\Omega \Sigma_g \to L \Sigma_g \to \Sigma_g$ to each of the connected components $L_{[h]}\Sigma_g$ of $LG$. Because the connected components of $\Omega \Sigma_g$ are contractible and $\Sigma_g$ is a $K(\pi_g,1)$, we conclude that $L_{[h]}\Sigma_g$ is a $K(G,1)$ as well. But for which groups $G$? To find out, we fix a basepoint $p \in \Sigma$ and a based representative $h$ of $[h]$ and use the following lemma:

\begin{lemma}\label{lemcentralizerpi1} $\pi_1(L_{[h]}\Sigma_g,h)$ is given by the centralizer in $\pi_1(\Sigma_g,p)$ of the representative $h$ of the conjugacy class $[h]$.\end{lemma}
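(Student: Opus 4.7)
The plan is to apply the long exact sequence of homotopy groups to the restriction of the evaluation fibration $\Omega \Sigma_g \to L\Sigma_g \to \Sigma_g$ to the connected component $L_{[h]}\Sigma_g$. First I identify the fibre over $p$: it is the subset $F_{[h]} \subset \Omega \Sigma_g$ of based loops at $p$ that are freely homotopic to $h$, which is precisely the disjoint union of those components of $\Omega \Sigma_g$ indexed by the elements of the conjugacy class $[h] \subseteq \pi_g$. Since $\Sigma_g$ is a $K(\pi_g,1)$, every component of $\Omega \Sigma_g$ is contractible, so $\pi_1(F_{[h]},h) = 0$ and $\pi_0(F_{[h]}) = [h]$.

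Writing out the relevant segment of the long exact sequence of this restricted fibration, and using that $L_{[h]}\Sigma_g$ is by construction connected, gives
$$0 = \pi_1(F_{[h]}, h) \longrightarrow \pi_1(L_{[h]}\Sigma_g, h) \longrightarrow \pi_1(\Sigma_g, p) \xrightarrow{\ \partial\ } \pi_0(F_{[h]}) \longrightarrow \pi_0(L_{[h]}\Sigma_g) = *.$$
Hence $\pi_1(L_{[h]}\Sigma_g, h)$ is identified with the stabilizer of $h$ under the action of $\pi_g = \pi_1(\Sigma_g, p)$ on $\pi_0(F_{[h]}) = [h]$ encoded by the connecting map $\partial$.

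It then remains to recognise this action as conjugation. Given $\gamma \in \pi_1(\Sigma_g, p)$, I construct an explicit lift $\tilde{\gamma}:[0,1] \to L\Sigma_g$ of $\gamma$ starting at $h$ by sliding the basepoint of the free loop $h$ along the path $\gamma$; viewed as a map $H:[0,1]\times S^1 \to \Sigma_g$, this satisfies $H(s,0)=\gamma(s)$ and $H(0,\cdot)=h$, and its endpoint is the based loop $\gamma \cdot h \cdot \gamma^{-1}$. Consequently $\partial(\gamma) = [\gamma h \gamma^{-1}]$, whose stabilizer under this action is by definition the centralizer of $h$ in $\pi_g$.

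The one step that is not purely formal is the last one: identifying $\partial$ with the conjugation action. Although this is a standard monodromy calculation for the evaluation fibration, some care is needed in writing down the explicit homotopy $H$ and checking that the resulting endpoint produces $\gamma h \gamma^{-1}$ rather than $\gamma^{-1} h \gamma$; everything else is a direct consequence of $\Sigma_g$ being aspherical.
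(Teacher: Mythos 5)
Your proof is correct, but it is not the argument the paper gives. The paper proves the lemma by the exponential-law/torus argument: an element of $\pi_1(L_{[h]}\Sigma_g,h)$ is a homotopy class of maps $S^1\times S^1\to\Sigma_g$ restricting to $h$ on $S^1\times 1$; the restriction $f$ to $1\times S^1$ gives a map $S^1\vee S^1\to\Sigma_g$, which extends over the torus precisely when $hfh^{-1}f^{-1}$ is nullhomotopic, i.e.\ when $f$ lies in the centralizer of $h$. You instead run the tail of the long exact sequence of the restricted evaluation fibration: asphericity makes $\pi_1$ of the fibre vanish (injectivity of $\mathrm{ev}_*$), $\pi_0$ of the fibre is the conjugacy class $[h]$, and exactness identifies the image of $\pi_1(L_{[h]}\Sigma_g,h)$ with the stabilizer of $h$ under the monodromy action, which your explicit lift (sliding the marked point of $h$ along $\gamma$) identifies with conjugation; the stabilizer is then the centralizer regardless of whether the convention yields $\gamma h\gamma^{-1}$ or $\gamma^{-1}h\gamma$, so the ambiguity you flag is harmless. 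Your route is the more structural one (it works verbatim for the free loop space of any aspherical space, and it reuses the fibration the paper has already set up to show $L_{[h]}\Sigma_g$ is a $K(G,1)$), at the cost of invoking the identification of the connecting map with the conjugation action; the paper's route is deliberately elementary --- it avoids monodromy conventions entirely --- and has the practical advantage that the explicit torus maps are what the paper later uses to write down the generator $\tilde h$ of $H_1(L_{[h]}\Sigma_g;\bZ)$. Since in your version the isomorphism is induced by evaluation (a loop in $L_{[h]}\Sigma_g$ maps to the trace of its marked point), that later step is still available, just one translation less immediate.
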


\begin{proof}Although this fits into the more general context of Whitehead products on homotopy groups, we give an elementary proof. An element of this group is a homotopy class of maps $S^1 \to L\Sigma_g$ which sends $1$ to the based loop $h$. This is the same as a homotopy class of maps $S^1 \times S^1 \to \Sigma_g$ which sends $S^1 \times 1$ to $h$. Consider the other restriction $f: 1 \times S^1 \to \Sigma_g$ and hence the map $(h,f): S^1 \vee S^1 \to \Sigma_g$. The fact that this factors over $S^1 \times S^1$ is equivalent to the fact that $hfh^{-1}f^{-1}$ is homotopically trivial, which is in turn equivalent to $f$ being a centralizer of $h$. Conversely, any element of the centralizer gives such a map.\end{proof}  

Let $e$ denote the constant loop at $p$. Using proposition \ref{propcentralizer} we obtain that $\pi_1(L_{[e]}\Sigma_g,e) = \pi_g$ and $\pi_1(L_{[h]}\Sigma_g,h) \cong \bZ$ for $[h] \neq [e]$. The former implies $L_{[e]} \Sigma \simeq \Sigma$, and indeed evaluation at the basepoint and inclusion of constant loops are inverse up to homotopy. The latter tells us that $L_{[h]}\Sigma$ is homotopy equivalent to a circle for $[h] \neq [e]$. This allows for a complete calculation of the homology of $L\Sigma_g$.

\begin{theorem}The integral homology of $L\Sigma_g$ is given by

\[H_*(L\Sigma_g;\bZ) = \bigoplus_{[h] \in \mr{Conj}(\pi_g)} H_*(L_{[h]}\Sigma_g;\bZ) \cong \begin{cases} \bZ[L] & \text{if $*=0$} \\
H_1(\Sigma_g;\bZ) \oplus \bZ[L]/(\bZ \cdot [e]) & \text{if $* =1$} \\
\bZ & \text{if $*=2$} \\
0 & \text{otherwise} \end{cases}\]\end{theorem}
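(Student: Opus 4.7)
The plan is to simply combine the topological decomposition of $L\Sigma_g$ into connected components established just above with the homotopy equivalences already deduced for each component. Since $L\Sigma_g = \bigsqcup_{[h] \in \mr{Conj}(\pi_g)} L_{[h]}\Sigma_g$, singular homology splits as the direct sum over conjugacy classes, so the first displayed equality is immediate and it suffices to compute each summand.

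For the contribution of the identity component, I would invoke the homotopy equivalence $L_{[e]}\Sigma_g \simeq \Sigma_g$ noted in the discussion before the theorem (evaluation at the basepoint and inclusion of constant loops are mutually inverse up to homotopy, using that components of $\Omega\Sigma_g$ are contractible). Then I would quote the standard cellular computation $H_0(\Sigma_g;\bZ) = \bZ$, $H_1(\Sigma_g;\bZ) = \bZ^{2g}$, $H_2(\Sigma_g;\bZ) = \bZ$ and zero otherwise. For any non-identity conjugacy class, Lemma \ref{lemcentralizerpi1} together with Proposition \ref{propcentralizer} gives $\pi_1(L_{[h]}\Sigma_g,h) \cong \bZ$, and since $L_{[h]}\Sigma_g$ is a $K(G,1)$, this component is homotopy equivalent to $S^1$. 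Hence each such component contributes $\bZ$ in degrees $0$ and $1$ and nothing else.

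Assembling the summands degree by degree then yields the displayed formula: degree $0$ picks up one $\bZ$ per conjugacy class, giving the free abelian group $\bZ[L]$; degree $1$ picks up $H_1(\Sigma_g;\bZ)$ from the identity component and one $\bZ$ for each non-identity class, which is naturally $\bZ[L]/(\bZ\cdot [e])$; degree $2$ comes only from the fundamental class of $\Sigma_g$; and all higher degrees are zero.

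There is really no substantive obstacle here, as all the work was already done in Proposition \ref{propcentralizer} and Lemma \ref{lemcentralizerpi1}: once those are in hand, the homotopy type of every connected component is either $\Sigma_g$ or $S^1$, and the theorem is just the bookkeeping. The only minor point to be careful about is to distinguish the identity conjugacy class from the rest when writing the degree-$1$ answer, which is what the quotient $\bZ[L]/(\bZ\cdot[e])$ records.
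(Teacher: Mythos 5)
Your argument is correct and is exactly the route the paper takes: the theorem is stated as the bookkeeping consequence of the component decomposition $L\Sigma_g = \bigsqcup_{[h]} L_{[h]}\Sigma_g$ together with $L_{[e]}\Sigma_g \simeq \Sigma_g$ and $L_{[h]}\Sigma_g \simeq S^1$ for $[h] \neq [e]$, which were established via Lemma \ref{lemcentralizerpi1} and Proposition \ref{propcentralizer}. Nothing further is needed.
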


Here we have to be careful: in $H_1$, the summand corresponding to a conjugacy class $[h]$ is \textit{not} always generated by the class that one would expect.

Let $k_h$ be a loop generating the centralizer of $h$. Note that we can arrange that $h = k_h^l$ for some $l \in \bN$, since $h$ lies in its own centralizer. The example of $A_1^2$ shows that we can have $l \neq 1$. It is easy to see that $l$ is independent of the choice of representative $h$. We call $l = l([h])$ the level of the conjugacy class.\footnote{It is essentially a winding number: $l$ is the largest number such that there exists a representative $S^1 \to \Sigma_g$ of $h$ which factors as $S^1 \overset{z \mapsto z^l}{\to} S^1 \to \Sigma_g$. If $l=1$ then the corresponding isotopy class of closed curve is called primitive, exactly because it can not be written as a power of a smaller closed curve.} Using the proof of the lemma \ref{lemcentralizerpi1} we now see that a generator of $H_1(L_{[h]}\Sigma_g;\bZ)$ is given by the homology class of the cycle 
\[\tilde{h} := \theta \mapsto \left(t \mapsto h\left(t+\frac{\theta}{l}\right)\right)\]

\section{The BV-operator} Rotation of loops gives an action of $S^1$ on the free loop space: 
\begin{align*}\rho: S^1 \times L\Sigma_g &\to L\Sigma_g \\
(\theta,\gamma) &\mapsto (t \mapsto \gamma(t+\theta))\end{align*}

The BV-operator is given by $\Delta(a) = \rho_*([S^1] \times a)$ and we can calculate it explicitly. Note that $\rho$ respects connected components, so it suffices to calculate the BV-operator for each connected component separately. We start with $[h] = [e]$. Then the action $\rho$ is homotopy trivial, so $\Delta$ vanishes on the homology of $L_{[e]}\Sigma_g$. 

Now let's do $[h] \neq [e]$. Only in degree one could the BV-operator be non-zero. If we pick a representative $h$ as our generator for $H_0(L_{[h]}\Sigma_g;\bZ)$ we see directly that $\Delta(h) = l([h])\tilde{h}$, where $l$ is the level of the conjugacy class of $h$ as before.

\section{The string product and coproduct} We will use the actual definition of the string product as being induced by the following diagram
\[\xymatrix{L\Sigma_g \times L\Sigma_g \ar[d]_{\mr{ev} \times \mr{ev}} & \ar[l]_(.55){i} \mr{Map}(S^1 \vee S^1,\Sigma_g) \ar[d]^{\mr{ev}} \ar[r]^(.65){j} & L\Sigma_g \\
\Sigma_g \times \Sigma_g & \ar[l]^\nabla \Sigma_g & }\]

\noindent as $H_*(L\Sigma_g;\bZ) \otimes H_*(L\Sigma_g;\bZ) \ni a \otimes b \mapsto j_* i^!(a \otimes b) \in H_{*-d}(L\Sigma_g;\bZ)$. Because our surfaces are even-dimensional we do not need to worry about signs.

We know that the string product drops the degree by -2 and has a unit in $H_2(L\Sigma_g;\bZ)$ given by the image $c_*([\Sigma_g])$ of fundamental class of $\Sigma_g$ under the induced map $c_*: H_*(\Sigma_g;\bZ) \to H_*(L_{[e]} \Sigma_g;\bZ)$ of the constant map. This unit is therefore exactly the generator of $H_2(L\Sigma_g;\bZ)$. Hence it suffices to calculate the string product on $H_1(L\Sigma_g;\bZ) \otimes H_1(L\Sigma_g;\bZ)$, which is mapped to $H_0(L\Sigma_g;\bZ)$.

The connected components of $L\Sigma_g \times L\Sigma_g$ are given by the products $L_{[h_1]}\Sigma_g \times L_{[h_2]}\Sigma_g$. If $[h_1] = [e] = [h_2]$ then the vertical arrows become homotopy equivalences and the string product reduces to the ordinary intersection product $\langle-,-\rangle$ on the homology of $\Sigma_g$: $A_i \cdot A_j = B_i \cdot B_j = 0$ and $A_i \cdot B_j = \delta_{ij}$.

With two lemma's we calculate the other two cases. The first case is $[h_1] = [e]$ and $[h_2] = [h] \neq [e]$.

\begin{lemma}Let $\tilde{h}\in H_1(L_{[h]}\Sigma_g,\bZ)$ be the generator. We have that $A_i \cdot \tilde{h} = \frac{1}{l([h])}\langle A_i, h \rangle h$, where $\langle-,-\rangle$ is the ordinary intersection product. A similar formula holds for $B_i \cdot \tilde{h}$.\end{lemma}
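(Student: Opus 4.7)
The plan is to unwind the pullback--pushforward definition of the string product on concrete transverse representatives, reducing the computation to the ordinary intersection pairing on $\Sigma_g$. Pick a smooth embedded representative $\alpha_i\colon S^1 \to \Sigma_g$ of $A_i$, and writing $h = k_h^l$ observe that $h(t+\theta/l) = k_h(lt+\theta)$. Then $s \mapsto c_{\alpha_i(s)}$ is a 1-cycle representing $A_i \in H_1(L_{[e]}\Sigma_g;\bZ)$, and $\theta \mapsto (t \mapsto k_h(lt+\theta))$ is a 1-cycle representing $\tilde h \in H_1(L_{[h]}\Sigma_g;\bZ)$. After a small perturbation I may assume $\alpha_i$ is transverse to $k_h$ inside $\Sigma_g$.

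Tracing the defining diagram, the product 2-cycle descends under $\mr{ev}\times\mr{ev}$ to the smooth map $(s,\theta)\mapsto (\alpha_i(s), k_h(\theta))$ from $S^1\times S^1$ to $\Sigma_g\times\Sigma_g$; its preimage of the diagonal is the finite set of transverse intersection points $(s_j,\theta_j)$ with $\alpha_i(s_j)=k_h(\theta_j)$. Their signed count is the ordinary intersection number $\langle A_i, k_h\rangle$, and because $h$ wraps $l$ times around $k_h$ we have $\langle A_i,h\rangle = l\cdot\langle A_i,k_h\rangle$.

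At each such intersection point the corresponding element of $\mr{Map}(S^1\vee S^1,\Sigma_g)$ is the wedge of the constant loop at $\alpha_i(s_j)=k_h(\theta_j)$ with the loop $t\mapsto k_h(lt+\theta_j)$; concatenation collapses the constant piece up to reparametrization, leaving a rotate of $h$, which represents the generator $h\in H_0(L_{[h]}\Sigma_g;\bZ)$. Summing with signs gives
\[ A_i\cdot\tilde h \;=\; \langle A_i,k_h\rangle\cdot h \;=\; \frac{1}{l([h])}\langle A_i,h\rangle\cdot h, \]
and the calculation for $B_i\cdot\tilde h$ is identical.

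The main technical step is justifying that the Umkehr map $i^!$ for the codimension-2 embedding $\Sigma_g\hookrightarrow\Sigma_g\times\Sigma_g$ is indeed computed by this transverse intersection count of smooth representatives. This is the standard Thom-class / tubular-neighborhood argument, and because $\dim\Sigma_g$ is even the orientation signs coming from the Thom class agree with those of the ordinary intersection pairing on $H_1(\Sigma_g;\bZ)$, so no extra sign bookkeeping is needed.
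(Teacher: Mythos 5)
Your argument is correct, but it evaluates the umkehr map by a different mechanism than the paper does. You compute $i^!$ at the chain level: choose smooth, mutually transverse representatives, observe that the composite $(s,\theta)\mapsto(\alpha_i(s),k_h(\theta))$ hits the diagonal in finitely many signed points, identify each resulting figure-eight with a loop freely homotopic to $h$, and sum to get $\langle A_i,k_h\rangle\,h=\tfrac{1}{l([h])}\langle A_i,h\rangle\,h$. The paper instead stays entirely homological: it notes that $i^!(A_i\otimes\tilde h)$ must be a multiple of the generator $h$ of $H_0(L_{[h]}\Sigma_g;\bZ)$, and detects that multiple by pushing forward along $\mr{ev}_*$ and using naturality of the cap product with the Thom class $u$ of the diagonal, so that the answer is $u\cap(A_i\otimes\mr{ev}_*\tilde h)=u\cap(A_i\otimes k_h)=\langle A_i,k_h\rangle$; the key common ingredient in both proofs is that the evaluation of the cycle $\tilde h$ traverses $k_h$ exactly once, together with $\langle A_i,h\rangle=l([h])\langle A_i,k_h\rangle$. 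What your route buys is geometric transparency — you see the actual intersection points and the concatenated loops — but its rigor rests on the step you defer to ``the standard Thom-class / tubular-neighborhood argument,'' namely that the Thom-class umkehr map of Cohen--Jones type is computed by transverse intersection of smooth representatives; that compatibility is standard but not trivial, and it also requires you to smooth and perturb $h$ (hence $k_h$) within its free homotopy class, not just $\alpha_i$, while checking the class of $\tilde h$ is unchanged. The paper's projection-formula argument sidesteps all of this at the cost of being less visibly geometric, so if you keep your version you should either cite the equivalence of the geometric and Thom-class descriptions of the string product or note that the detection trick via $\mr{ev}_*$ makes the transversality discussion unnecessary.
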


\begin{proof}In this case the diagram reduces to
\[\xymatrix{\Sigma_g \times L_{[h]} \Sigma_g \ar[d]_{id \times \mr{ev}} & \ar[l]_(.4){\iota = \mr{ev} \times id} L_{[h]} \Sigma_g \ar[d]^{\mr{ev}} \ar[r]^{id} & L_{[h]}\Sigma_g \\
\Sigma_g \times \Sigma_g & \ar[l]^\nabla \Sigma_g & }\]

The class $i_!(A_i \otimes \tilde{h})$ must be a multiple of the generator $h$ of $H_0(L_{[h]}\Sigma_g;\bZ)$. It is given by $(id \times \mr{ev})^*(u) \cap (A_i \otimes \tilde{h})$, where $u$ is Thom class for $\nabla$. To determine which multiple we can compose with $\mr{ev}_*$. We get the following formula:
\[\mr{ev}_*((id \times \mr{ev})^*(u) \cap (A_i \otimes \tilde{h})) = u \cap (A_i \otimes \mr{ev}_*(\tilde{h})) = u \cap (A_i \otimes k_h)\]

\noindent where $k_h$ was the generator of the centralizer of $h$. This is the intersection product $\langle A_i,k_h \rangle$, which in turn is equal to $\frac{1}{l([h])}\langle A_i, h \rangle$ because by definition $h = (k_h)^{l([h])}$. This proves the formula.
\end{proof}

The second case is $[h_1], [h_2] \neq [e]$. The result uses the Goldman bracket \cite{goldman}.

\begin{lemma}For $i=1,2$, let $\tilde{h}_i\in H_1(L_{[h_i]}\Sigma_g,\bZ)$ be the generator. We have that $\tilde{h}_1 \cdot \tilde{h}_2$ is equal to a multiple of the Goldman bracket: 
\[\tilde{h}_1 \cdot \tilde{h}_2 = \frac{1}{l([h_1])l([h_2])}[h_1,h_2]\]\end{lemma}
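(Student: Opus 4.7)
The plan is to represent $\tilde h_1 \otimes \tilde h_2$ by an explicit torus in $L\Sigma_g \times L\Sigma_g$ and push it through each step of the diagram, keeping track of signs and multiplicities so as to recognise the resulting $0$-cycle as a rescaling of the Goldman bracket.

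First I would represent $\tilde h_1 \otimes \tilde h_2 \in H_2(L_{[h_1]}\Sigma_g \times L_{[h_2]}\Sigma_g; \bZ)$ by the smooth map $\phi: T^2 \to L\Sigma_g \times L\Sigma_g$ given by $(\theta_1,\theta_2) \mapsto (\tilde h_1(\theta_1),\tilde h_2(\theta_2))$. Composing with $\mr{ev}\times \mr{ev}$ gives the map $T^2 \to \Sigma_g\times\Sigma_g$, $(\theta_1,\theta_2)\mapsto(h_1(\theta_1/l_1),h_2(\theta_2/l_2))$, which by the identity $h_i=k_{h_i}^{l_i}$ equals $k_{h_1}\times k_{h_2}$ after reparametrisation. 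Since $g\geq 2$, we may by a small homotopy in the free loop space assume that the immersed curves $k_{h_1}$ and $k_{h_2}$ are transverse in $\Sigma_g$; this does not change the homology class.

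Next I would compute $i^!(\tilde h_1 \otimes \tilde h_2)$ via the Thom class $u$ of $\nabla$. By naturality of the cap product and the transversality of $(k_{h_1}\times k_{h_2})$ to $\nabla(\Sigma_g)$, the class $(\mr{ev}\times\mr{ev})^*(u)\cap \phi_*[T^2]$ is represented by the $0$-cycle in $\mr{Map}(S^1\vee S^1,\Sigma_g)$ supported on the finite set $P=\{(\theta_1,\theta_2)\in T^2 \mid k_{h_1}(\theta_1)=k_{h_2}(\theta_2)\}$, each point weighted by the local intersection sign $\epsilon(\theta_1,\theta_2)\in\{\pm 1\}$ of the two curves in $\Sigma_g$. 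At such a point the corresponding figure-eight consists, by the very definition of $\tilde h_i$, of the loops $t\mapsto h_1(t+\theta_1/l_1)$ and $t\mapsto h_2(t+\theta_2/l_2)$ both based at $p=k_{h_1}(\theta_1)$; these represent $h_1$ and $h_2$ based at $p$ in $\pi_1(\Sigma_g,p)$.

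Applying $j_*$ concatenates the two loops, so the result represents the free homotopy class $[h_1\cdot_p h_2]$ in the sense of Goldman. Hence
\[ \tilde h_1 \cdot \tilde h_2 = \sum_{(\theta_1,\theta_2)\in P}\epsilon(\theta_1,\theta_2)\,[h_1\cdot_p h_2] \in H_0(L\Sigma_g;\bZ).\]
The final step is to compare this sum with the Goldman bracket $[h_1,h_2]=\sum_{q\in h_1\pitchfork h_2}\epsilon(q)\,[h_1\cdot_q h_2]$. Since $h_i$ traverses $k_{h_i}$ exactly $l_i$ times, each transverse intersection $(\theta_1,\theta_2)$ of the curves $k_{h_1},k_{h_2}$ lifts to exactly $l_1 l_2$ transverse intersections of $h_1$ with $h_2$, each giving the same sign and the same concatenated free homotopy class $[h_1\cdot_p h_2]$. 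Therefore $[h_1,h_2] = l_1 l_2\,(\tilde h_1 \cdot \tilde h_2)$, proving the formula.

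The main obstacle I anticipate is the bookkeeping of parametrisations: one must verify that the figure-eight produced by the construction really is $h_1$ and $h_2$ based at $p$ (and not some conjugate in a different free homotopy class), and that the $l_1 l_2$ overcounting on the Goldman side matches the single parametrised intersection on the string-product side. Once the tori are described explicitly, the rest is transversality and counting.
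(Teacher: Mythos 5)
Your proposal is correct, but it takes a genuinely different route from the paper. The paper's proof is purely formal: it cites the identity of Chas--Sullivan (their example 7.1) that the Goldman bracket satisfies $[h_1,h_2]=\Delta(h_1)\cdot\Delta(h_2)$, plugs in the computation $\Delta(h_i)=l([h_i])\,\tilde h_i$ from the BV-operator section, and uses bilinearity together with the fact that the homology is free abelian to divide by $l([h_1])l([h_2])$. You instead reprove that identity in this special case by hand: you represent $\tilde h_1\otimes\tilde h_2$ by an explicit torus, observe that its evaluation is $k_{h_1}\times k_{h_2}$, compute the umkehr map via the pulled-back Thom class of the diagonal after making the two curves transverse, and identify the resulting signed $0$-cycle of concatenations with Goldman's sum; the factor $l_1 l_2$ then comes from the correct observation that re-basing $h_i=k_{h_i}^{l_i}$ at the $l_i$ parameter values over a geometric intersection point yields literally the same based loop, so each geometric crossing of $k_{h_1}$ with $k_{h_2}$ accounts for $l_1 l_2$ equal terms in the Goldman bracket. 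What your approach buys is self-containedness: it does not rely on the external identification of the loop product of $\Delta$-classes with the Goldman bracket, and it makes the geometric origin of the factor $\frac{1}{l_1 l_2}$ transparent. The cost is that you must justify the chain-level, transversality-based description of $i^!$ (perturbing within the free homotopy classes, transversality of the evaluation map to the diagonal, and matching the local signs with Goldman's convention), which is exactly the technical content that the citation lets the paper avoid; the level of rigor you employ is, however, comparable to that of the paper's preceding lemma, which uses the same Thom-class description of the umkehr map.
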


\begin{proof}It is well-known that the Goldman bracket $[h_1,h_2]$ of $h_1$ and $h_2$ is given by $\Delta(h_1) \cdot \Delta(h_2)$ \cite[example 7.1]{chassullivan}. We just have to note that $\Delta(h_1) = l([h_1])\tilde{h}_1$, $\Delta(h_2) = l([h_2])\tilde{h}_2$ and that the string product and the Goldman bracket are bilinear. Because all homology groups are free abelian, we do not need to worry about torsion.\end{proof}

Finally, we get to the string coproduct. The string coproduct is almost trivial: it vanishes except on $H_2(L_{[e]}\Sigma_g;\bZ)$ and is given on the unit as follows:
\[H_2(L_{[e]}\Sigma_g;\bZ) \ni c_*([\Sigma_g]) \mapsto \chi(\Sigma_g)[e] \otimes [e]= (2-2g)[e] \otimes [e] \in H_0(L_{[e]}\Sigma_g;\bZ) \otimes H_0(L_{[e]}\Sigma_g;\bZ)\]

This is a consequence of results by Tamanoi \cite{tamanoicoprod}.

\bibliographystyle{amsalpha}
\bibliography{simplesurfaces}

\end{document}